\newtheorem{thm}{Theorem}[section]
\newtheorem{cor}[thm]{Corollary}
\newtheorem{prop}[thm]{Proposition}
\newtheorem{lem}[thm]{Lemma}
\newtheorem*{claim}{Claim}
\theoremstyle{definition}
\newtheorem{defn}[thm]{Definition}
\newtheorem{question}[thm]{Question}
\theoremstyle{remark}
\let\c@equation\c@thm
\numberwithin{equation}{section}
\title{Uncountable sets and an infinite linear order game}
\author{Tonatiuh Matos-Wiederhold \qquad Luciano Salvetti}
\begin{document}

\maketitle
{\centering\tiny\vspace{-0.6cm}Dept.\ of Mathematics\\ University of Toronto\\\texttt{\{t.wiederhold,luciano.salvetti\}@mail.utoronto.ca}\\}

\begin{abstract} 
    An infinite game on the set of real numbers appeared in Matthew Baker's work [Math. Mag. 80 (2007), no. 5, pp. 377--380] in which he asks whether it can help characterize countable subsets of the reals. This question is in a similar spirit to how the Banach-Mazur Game characterizes meager sets in an arbitrary topological space.
    
    In a recent paper, Will Brian and Steven Clontz prove that in Baker's game, Player II has a winning strategy if and only if the payoff set is countable. They also asked if it is possible, in general linear orders, for Player II to have a winning strategy on some uncountable set.
    
    To this we give a positive answer and moreover construct, for every infinite cardinal $\kappa$, a dense linear order of size $\kappa$ on which Player II has a winning strategy on \emph{all} payoff sets. We finish with some future research questions, further underlining the difficulty in generalizing the characterization of Brian and Clontz to linear orders.
\end{abstract}

\maketitle

\section{Introduction}

  In \cite{baker}, Matt Baker introduces the following game, called the \textit{Cantor Game}, on the real numbers: Fix a subset $S\subseteq\mathbb{R}$. Player I starts by picking a real number $a_0$. Then, Player II picks a real number $b_0$ such that $a_0<b_0$. In the $n$-th turn, Player I picks a real number $a_n$ such that $a_{n-1}<a_n<b_{n-1}$ and then Player II picks a real number $b_n$ such that $a_n<b_n<b_{n-1}$. After $\omega$-many turns they form two sequences of real numbers $\{a_n\}$ and $\{b_n\}$ such that $a_0<a_1<\cdots<b_1<b_0$. We say that Player I wins if and only if there exists $x\in S$ such that $a_n<x<b_n$ for all $n<\omega$. Otherwise, we say that Player II wins. It is clear that such game can be defined on more general linear orders. Our Proposition~\ref{prop:arrow} generalizes the following observation when playing the game in any dense linear order.
  
  \begin{prop}[Baker, 2006]\label{prop:countable win}
      If $S$ is countable, then Player II has a winning strategy in the Cantor Game.
  \end{prop}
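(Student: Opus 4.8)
The plan is for Player II to win by diagonalizing against a fixed enumeration of $S$. Since $S$ is countable we may write $S=\{s_n:n<\omega\}$, allowing repetitions if $S$ is finite (and if $S=\emptyset$ then Player II wins no matter how the play goes, so assume $S\neq\emptyset$). The guiding idea is that the move $b_n$ Player II makes on turn $n$ will be chosen so as to guarantee $s_n\notin\bigcap_{k<\omega}(a_k,b_k)$. Since Player I wins exactly when some $x\in S$ lies in $\bigcap_{k<\omega}(a_k,b_k)$, eliminating every $s_n$ in turn is enough for Player II.

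The strategy is as follows. On turn $n$, Player II faces Player I's move $a_n$ with $a_{n-1}<a_n<b_{n-1}$ (for the opening move, read $b_{-1}=+\infty$, so that only the lower constraint is present). If $s_n\le a_n$, Player II plays any legal value, say $b_n=(a_n+b_{n-1})/2$ (or $b_n=a_0+1$ on the first move); then already $s_n\le a_n<b_n$ shows $s_n\notin(a_n,b_n)$. If instead $s_n>a_n$, Player II plays $b_n=\min\{\,s_n,\ (a_n+b_{n-1})/2\,\}$; this is legal because each of the two numbers being compared lies strictly between $a_n$ and $b_{n-1}$, and it guarantees $b_n\le s_n$, so once more $s_n\notin(a_n,b_n)$.

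To conclude, I would observe that in both cases $s_n\notin(a_n,b_n)$, and since $(a_n,b_n)\supseteq\bigcap_{k<\omega}(a_k,b_k)$ this yields $s_n\notin\bigcap_{k<\omega}(a_k,b_k)$. As $n$ ranges over $\omega$ every element of $S$ appears as some $s_n$, hence $S\cap\bigcap_{k<\omega}(a_k,b_k)=\emptyset$; that is, there is no $x\in S$ with $a_n<x<b_n$ for all $n$, which is precisely the statement that Player II has won.

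I do not expect a serious obstacle here: the one place needing a little care is the case $a_n<s_n<b_{n-1}$, where Player II must move strictly inside $(a_n,b_{n-1})$ and still keep $s_n$ out of $(a_n,b_n)$, which is handled by capping her move at $s_n$ itself; the treatment of the opening move and of finite or empty $S$ is purely cosmetic, dealt with by the conventions above. It is also worth noting that the argument uses nothing about $\mathbb{R}$ beyond the ability to choose a point strictly between two given points, which is why it will generalize to arbitrary dense linear orders (cf.\ Proposition~\ref{prop:arrow}).
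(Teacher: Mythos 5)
Your proof is correct and uses the same diagonalization the paper uses: the paper does not prove this proposition separately but subsumes it under Proposition~\ref{prop:arrow}, whose strategy (play $b_n=\min(S_n\cap(a_n,b_{n-1}))$ when possible, for a decomposition $S=\bigcup_n S_n$ into well-founded pieces) reduces to exactly your strategy upon taking $S_n=\{s_n\}$. The only nitpick is your legality justification for $b_n=\min\{s_n,(a_n+b_{n-1})/2\}$: when $s_n\geq b_{n-1}$ it is not true that ``each of the two numbers lies strictly between $a_n$ and $b_{n-1}$,'' though the minimum is still legal, so the strategy itself is fine.
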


  The Cantor Game was introduced as a way of proving that $\mathbb{R}$ is uncountable, which follows by the previous observation. Baker also proved that if $S$ contains a perfect set, then Player I has a winning strategy. Later in \cite{cantor},  M.D. Ladue (a student of Baker) proved that Player I has a winning strategy if and only if the set $S$ contains a perfect set. The question of whether the converse of proposition \ref{prop:countable win} is true remained open until 2022. In \cite{brian-clontz}, W. Brian and S. Clontz used elementary submodels to prove the converse statement of proposition \ref{prop:countable win} above. They also motivate our paper's exploration of the general setting studied in Section~\ref{sec:linear}. In particular, they asked the following:
  
  \begin{question}[Brian-Clontz, 2023]\label{question:brian-clontz}
    Is there a linear order $(X,<)$ and an uncountable $S\subseteq X$ on which Player II has a winning strategy?
  \end{question}

  In this paper, we provide an affirmative answer to this question and study this game in the general setting.

  \textbf{Acknowledgements.} We thank Stevo Todorčević for his valuable insights and guidance regarding this work. We also thank Will Brian for reading our draft and for his helpful advice.

\section{Preliminaries}\label{sec:prelim}

Given an ordinal $\alpha$, we denote by $\alpha^*$ the converse of $\alpha$, i.e., the conversely well-ordered set $(\alpha,>)$.

\begin{defn}
    Given a linearly ordered set $(X,<)$, we use the symbol $X\not\rightarrow (\omega^*)_\omega^1$ to abbreviate the fact that $X$ is a countable union of well-ordered sets. This definition appears in a more general context in \cite{partition}.
\end{defn}

 Recall that a linearly ordered set is scattered if it does not contain copies of the rationals. A linearly ordered set is said to be $\sigma$-scattered if it is a countable union of scattered sets. Given an order type $\gamma$, we say that a linearly ordered set is $\gamma$-free if it does not contain copies of $\gamma$. The following result can be found in \cite{partition}. We include the proof for the reader's convenience.

\begin{thm}[Todorčević, 1985]\label{thm:todorcevic}
    Let $X$ be a linearly ordered set. The following are equivalent:
    \begin{itemize}
        \item [(i)] $X\not\rightarrow (\omega^*)_\omega^1$
        \item [(ii)] $X$ is $\sigma$-scattered and $\omega_1^*$-free.
    \end{itemize}
\end{thm}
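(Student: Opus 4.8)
The plan is to prove both implications of the equivalence, relying on known structural facts about linear orders. For the direction (i)$\Rightarrow$(ii), suppose $X=\bigcup_{n<\omega}W_n$ where each $W_n$ is well-ordered by $<$. First I would observe that a well-ordered set is scattered (it cannot contain a copy of $\mathbb{Q}$, since $\mathbb{Q}$ has no least element in any of its nonempty subsets fails — more precisely, a copy of $\mathbb{Q}$ inside $W_n$ would be a suborder of a well-order, hence well-ordered, contradicting that $\mathbb{Q}$ has a strictly decreasing $\omega$-sequence). Hence $X$ is a countable union of scattered sets, i.e. $\sigma$-scattered. For $\omega_1^*$-freeness: if $X$ contained a copy $C$ of $\omega_1^*$, then $C=\bigcup_{n<\omega}(C\cap W_n)$, so by regularity of $\omega_1$ some $C\cap W_n$ has order type a cofinal (hence order type $\omega_1^*$) subset of $C$; but $C\cap W_n\subseteq W_n$ is well-ordered, while $\omega_1^*$ has a strictly decreasing $\omega_1$-sequence and in particular a strictly decreasing $\omega$-sequence, contradiction. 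So (ii) holds.

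For the direction (ii)$\Rightarrow$(i), the plan is to use the characterization of $\sigma$-scattered orders together with $\omega_1^*$-freeness. The key tool is Hausdorff's classification of scattered linear orders: every scattered linear order is built up from singletons by transfinite iteration of sums indexed by ordinals and their reverses. I would first reduce to the case where $X$ itself is scattered and $\omega_1^*$-free, prove that such an $X$ is a countable union of well-ordered sets, and then take the countable union over the countably many scattered pieces of a $\sigma$-scattered $X$ (noting $\omega_1^*$-freeness passes to suborders). For a scattered $\omega_1^*$-free $X$, I would argue by induction on the Hausdorff rank. At successor-type stages $X$ is a sum $\sum_{i\in I}X_i$ with $I$ equal to some ordinal $\alpha$ or its reverse $\alpha^*$, and each $X_i$ of lower rank; $\omega_1^*$-freeness of $X$ forces, in the $I=\alpha^*$ case, that $\alpha<\omega_1$ (an $\omega_1$-indexed reverse sum of nonempty orders contains $\omega_1^*$), and in the $I=\alpha$ case there is no constraint on $\alpha$ but each $X_i$ is $\omega_1^*$-free of lower rank. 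Using the inductive hypothesis, write each $X_i=\bigcup_{n<\omega}W_i^n$ with $W_i^n$ well-ordered; then set $W^n=\bigcup_{i\in\alpha}W_i^n$ (when $I=\alpha$), which is well-ordered since it is an $\alpha$-indexed increasing sum of well-ordered sets; this writes $X$ as $\bigcup_{n<\omega}W^n$. When $I=\alpha^*$ with $\alpha<\omega_1$ countable, enumerate $\alpha=\{i_k:k<\omega\}$ and rearrange: $X=\bigcup_{n<\omega}\bigcup_{k<\omega}W_{i_k}^n$ is a countable union of well-ordered sets since each $W_{i_k}^n$ is well-ordered and a countable union of well-ordered sets is again of the form $X\not\rightarrow(\omega^*)^1_\omega$. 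Limit stages of the Hausdorff construction do not occur as separate cases in the standard formulation (one only iterates sums), so the induction closes.

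I expect the main obstacle to be handling the $I=\alpha^*$ summation case cleanly: one must genuinely use $\omega_1^*$-freeness to bound the index set to be countable, and then be careful that a countable union of (countable unions of) well-ordered sets is still a countable union of well-ordered sets — which is immediate — but also that the well-orderedness of $W^n=\bigcup_{i\in\alpha}W_i^n$ in the $I=\alpha$ case really does hold, i.e. that an ordinal-indexed sum of well-orders is a well-order (true: any nonempty subset has a least nonempty "block index", then take the least element within that block). A secondary subtlety is the base case and the exact bookkeeping of Hausdorff rank, but these are routine. The cleanest packaging is probably to prove the lemma: \emph{if $X=\sum_{i\in I}X_i$ where $I$ is well-ordered and each $X_i$ satisfies $X_i\not\rightarrow(\omega^*)^1_\omega$, then so does $X$}, and separately \emph{if $X=\sum_{i\in I}X_i$ where $I$ is countable and each $X_i\not\rightarrow(\omega^*)^1_\omega$, then so does $X$}, and then run the Hausdorff induction feeding in whichever lemma applies.
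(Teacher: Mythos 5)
Your proof is correct, but it takes a genuinely different route from the paper's. For (ii)$\Rightarrow$(i) the paper does not invoke Hausdorff's classification at all: it defines a condensation directly adapted to the target property, namely $x\sim y$ iff $[x,y]\not\rightarrow(\omega^*)^1_\omega$, shows each class is closed under cofinal (well-ordered) and coinitial (necessarily countable, by $\omega_1^*$-freeness) decompositions into subintervals, and then observes that the quotient is dense, hence a single class since $X$ is scattered. Your argument instead cites Hausdorff's theorem as a black box and inducts on Hausdorff rank, with the two sum lemmas (ordinal-indexed sums preserve $\not\rightarrow(\omega^*)^1_\omega$; countably-indexed sums preserve it) carrying the induction, and $\omega_1^*$-freeness used exactly once to force reverse-well-ordered index sets to have only countably many nonempty summands. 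The underlying combinatorics is identical --- indeed the paper's condensation argument is essentially the proof of Hausdorff's theorem specialized to this property --- so the paper's version is self-contained and shorter, while yours is more modular and isolates cleanly where each hypothesis is used. Two cosmetic points: in your (i)$\Rightarrow$(ii) argument you only need that some $C\cap W_n$ is \emph{infinite} (any infinite subset of a reverse well-order contains a strictly decreasing $\omega$-sequence), and ``cofinal'' should be ``coinitial''; and in the $I=\alpha^*$ case what $\omega_1^*$-freeness literally gives is that only countably many summands are nonempty, which suffices.
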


\begin{proof}
    $(i)\Rightarrow(ii)$ is clear. We show $(ii)\Rightarrow(i)$. It suffices to assume that $X$ is scattered (and $\omega_1^*$-free). We define the following equivalence relation on $X$: $x\sim y$ if and only if the closed interval $[x,y]\not\rightarrow (\omega^*)_\omega^1$. Notice that every equivalence class $\mathcal{C}$ is convex, i.e., if $x,y\in\mathcal{C}$ and $x<z<y$, then $z\in\mathcal{C}$. Let $[x]$ be the equivalence class of $x$. Since every equivalence class is convex, the order $[x]<[y]$ if and only if $x<y$ is well-defined. Hence, we can view the quotient $X/\!\sim$ as a subspace of $X$. Thus, $X/\!\sim$ is also scattered by assumption.
    \begin{claim}
        $[x]_{\geq}:=\{y\in[x]:y\geq x\}\not\rightarrow(\omega^*)_\omega^1$ for all $x$.
    \end{claim}
    \begin{proof}
        Let $\{y_\alpha:\alpha<\kappa\}$ be a cofinal subset of $[x]_{\geq}$ with $y_0=x$. So: $$[x]_{\geq}=\bigcup_{\alpha<\kappa}[y_\alpha, y_{\alpha+1}]$$
        Since $[y_\alpha, y_{\alpha+1}]\not\rightarrow(\omega^*)_\omega^1$, then write $[y_\alpha, y_{\alpha+1}]=\bigcup_{n<\omega}A_n^\alpha$ where $A_n^\alpha$ is $\omega^*$-free. Let $A_n:=\bigcup_{\alpha<\kappa}A_n^\alpha$. Notice that $A_n$ is $\omega^*$-free since $\kappa$ is $\omega^*$-free. Then, $[x]_\geq=\bigcup_{n<\omega}A_n$, so $[x]_\geq\not\rightarrow(\omega^*)_\omega^1$.
    \end{proof}
    \begin{claim}
        $[x]_{\leq}:=\{y\in[x]:y\leq x\}\not\rightarrow(\omega^*)_\omega^1$ for all $x$.
    \end{claim}
    \begin{proof}
        Let $\{y_\alpha:\alpha<\kappa\}$ be a coinitial subset of $[x]_\leq$ with $y_0=x$. Since $X$ is $\omega_1^*$-free, then $\kappa$ is countable so:
        $$[x]_\leq=\bigcup_{n<\omega}[y_{n+1},y_n]$$
        Since $[y_{n+1},y_n]\not\rightarrow(\omega^*)_\omega^1$ for all $n<\omega$, then $[x]_{\leq}\not\rightarrow(\omega^*)_\omega^1$.
    \end{proof}
    The previous two claims imply that $[x]\not\rightarrow(\omega^*)_\omega^1$ for all $x\in X$. Now we show that actually $X=[x]$ for some $x$.
    \begin{claim}
        $(X/\!\sim,<)$ is dense, i.e., if $[x]<[y]$ then there is $z\in Z$ such that $[x]<[z]<[y]$.
    \end{claim}
    \begin{proof}
        Let $[x]<[y]$ and suppose there is no $z$ such that $[x]<[z]<[y]$. Then, $[x,y]\subseteq [x]_\geq\cup[y]_\leq\not\rightarrow(\omega^*)_\omega^1$ which implies $[x]=[y]$.
    \end{proof}
    This last claim shows that $X$ must have one equivalence class, since otherwise $X/\!\sim$ would contain a copy of the rationals.
\end{proof}

It is clear that a subset $S$ of $\mathbb{R}$ is countable if and only if $S\not\rightarrow (\omega^*)_\omega^1$. The following generalizes this fact.

\begin{cor}\label{cor:sigma-countable}
    If $X$ is a $\omega_1$-free, $\omega_1^*$-free linearly ordered set. Then, the following are equivalent for any $S\subseteq X$:
    \begin{enumerate}
        \item $S$ is countable.
        \item $S\not\rightarrow (\omega^*)_\omega^1$.
        \item $S$ is $\sigma$-scattered.
    \end{enumerate}
\end{cor}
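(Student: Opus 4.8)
The plan is to establish the cycle of implications $(1)\Rightarrow(2)\Rightarrow(3)\Rightarrow(1)$, using Theorem~\ref{thm:todorcevic} for the middle step and the $\omega_1$-freeness hypothesis to close the loop. A preliminary observation used throughout is that both $\omega_1$-freeness and $\omega_1^*$-freeness are inherited by subsets, so in particular every $S\subseteq X$ is again $\omega_1$-free and $\omega_1^*$-free.

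For $(1)\Rightarrow(2)$: if $S$ is countable, write $S=\bigcup_{s\in S}\{s\}$, a countable union of singletons; each singleton is (trivially) well-ordered, so $S$ is a countable union of well-ordered sets, i.e.\ $S\not\rightarrow(\omega^*)_\omega^1$. No hypothesis on $X$ is needed here.

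For $(2)\Leftrightarrow(3)$: since $S$ is $\omega_1^*$-free, Theorem~\ref{thm:todorcevic} applies verbatim to $S$ in place of $X$, and it says precisely that $S\not\rightarrow(\omega^*)_\omega^1$ if and only if $S$ is $\sigma$-scattered and $\omega_1^*$-free. As the $\omega_1^*$-freeness clause is automatic, this gives the equivalence of $(2)$ and $(3)$ for any subset of an $\omega_1^*$-free linear order. The remaining implication $(3)\Rightarrow(1)$ — and this is the only place the extra $\omega_1$-freeness hypothesis is used — proceeds as follows: from $(3)$ and the equivalence just noted, $S=\bigcup_{n<\omega}W_n$ with each $W_n$ well-ordered (in the order inherited from $X$). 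If some $W_n$ were uncountable, its order type would be an ordinal of cardinality $\geq\omega_1$, so the initial segment of $W_n$ of order type $\omega_1$ would embed $\omega_1$ into $X$, contradicting that $X$ is $\omega_1$-free. Hence each $W_n$ is countable and so is $S=\bigcup_{n<\omega}W_n$.

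I do not expect a genuine obstacle: the corollary is essentially a repackaging of Theorem~\ref{thm:todorcevic} together with the elementary fact that an $\omega_1$-free linear order contains no uncountable well-ordered subset. The only points that deserve a line of care are that both freeness properties restrict to subsets, and the ordinal-arithmetic remark that a well-ordered set not embedding $\omega_1$ must be countable.
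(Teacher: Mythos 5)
Your proof is correct and follows the route the paper intends: the corollary is stated without proof as an immediate consequence of Theorem~\ref{thm:todorcevic}, and your cycle $(1)\Rightarrow(2)\Rightarrow(3)\Rightarrow(1)$ — using that both freeness properties pass to subsets and that an uncountable well-ordered set contains a copy of $\omega_1$ — is exactly the implicit argument. No gaps.
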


\section{The Baker Game on linear orders}\label{sec:linear}
  We may define a variation of Baker's Game in any dense linear order $(X,<)$. Given $S\subseteq X$, we denote by $BG(X,S)$ the game where two players take turns selecting elements $a_n,b_n$ of $X$ as in the Cantor Game for the reals. Player I wins if after $\omega$-many turns there is an $x\in S$ such that for all $n<\omega$, $a_n<x<b_n$, and Player II wins otherwise. The density condition ensures the game never reaches a stalemate. For the rest of the section, $X$ shall be a linearly ordered set and $S$ a subset of $X$. Let us first improve Proposition~\ref{prop:countable win}:

\begin{prop}\label{prop:arrow}
    If $S\not\rightarrow(\omega^*)_\omega^1$, then Player II has a winning strategy for $BG(X,S)$.

    In particular, every countable $S$ has this property.
\end{prop}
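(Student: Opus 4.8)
The plan is to hand Player II an explicit strategy that ``kills'' one of the well-ordered pieces of $S$ at each round. By the very definition of $S\not\rightarrow(\omega^*)_\omega^1$, fix a decomposition $S=\bigcup_{n<\omega}W_n$ in which every $W_n$, viewed as a suborder of $X$, is well-ordered (i.e.\ contains no copy of $\omega^*$). Player II will spend round $n$ making sure that the interval to which all future play is confined avoids $W_n$ altogether.

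Here is the strategy. Suppose we are at round $n$ and Player I has just played $a_n$; let $J_n=\{x\in X: a_n<x<b_{n-1}\}$ when $n\geq 1$, and $J_0=\{x\in X: x>a_0\}$. If $W_n\cap J_n=\emptyset$, Player II plays any legal $b_n$ (one exists because $X$ is dense). Otherwise $W_n\cap J_n$ is a nonempty well-ordered set, so it has a least element $w_n$, and Player II plays $b_n=w_n$. The first thing I would check is that this is always a legal move: $w_n\in J_n$ means exactly $a_n<w_n$ and, for $n\geq 1$, also $w_n<b_{n-1}$, which is precisely what the rules require; density then also lets Player I continue, so a full $\omega$-length run is produced.

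Next I would verify that this strategy wins. The crucial point is that, after round $n$, we have $W_n\cap(a_n,b_n)=\emptyset$: if $W_n\cap J_n$ was empty this is immediate because $(a_n,b_n)\subseteq J_n$; and if $b_n=w_n=\min(W_n\cap J_n)$, then any element of $W_n$ lying in $(a_n,b_n)\subseteq J_n$ would be strictly below $w_n$, contradicting minimality. Since the intervals nest, $\bigcap_{m<\omega}(a_m,b_m)\subseteq(a_n,b_n)$ for every $n$, so this final intersection is disjoint from each $W_n$, hence from $S=\bigcup_nW_n$. Thus no $x\in S$ satisfies $a_n<x<b_n$ for all $n$, which is exactly what it means for Player II to win. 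The ``in particular'' clause then follows since a countable $S$ is the union of its singletons, and each singleton is well-ordered.

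I do not expect a genuine obstacle here; the argument is a diagonalization assigning round $n$ to $W_n$. The only things demanding a little care are: checking that the ``kill'' move $b_n=\min(W_n\cap J_n)$ is legal (it is, precisely because that minimum sits strictly inside $J_n$ by construction); the slightly separate bookkeeping at round $0$, where there is no previous bound $b_{-1}$; and observing that once $W_n$ has been excluded from the live interval at round $n$, the nesting of intervals keeps it excluded for the rest of the game, so the arbitrary (``wasted'') moves made at rounds where $W_n\cap J_n=\emptyset$ cause no trouble.
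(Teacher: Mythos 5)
Your proof is correct and is essentially identical to the paper's: both fix a decomposition of $S$ into countably many well-ordered pieces and have Player II play $b_n=\min(S_n\cap(a_n,b_{n-1}))$ when that set is nonempty (any legal move otherwise), so that the limit interval misses each piece in turn. Your write-up just spells out the legality and nesting verifications in more detail.
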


  \begin{proof}
    Let $S=\bigcup_{n<\omega}S_n$ where each $S_n$ is $\omega^*$-free (that is, well-founded). In the $n$-th turn, Player II plays $b_n=\min(S_n\cap(a_n,b_{n-1}))$ if $S_n\cap(a_n,b_{n-1})\neq\emptyset$ and any legal move otherwise (by density, there must always be one). Now, suppose that for all $n<\omega$, $a_n<x<b_n$. This implies that $x\notin S_n$ for all $n<\omega$, and hence, $x\notin S$. In other words, we described a winning strategy for Player II.
  \end{proof}

    In general $S\not\rightarrow(\omega^*)_\omega^1$ does not imply that $S$ is countable (unless it contains no copies of $\omega_1$ nor $\omega_1^*$). The previous proposition implies that if $S$ is an (uncountable) well-ordered set, then Player II has a winning strategy for $BG(X,S)$. We now show that if $S$ is conversely well-ordered, then Player II has a winning strategy for $BG(X,S)$. Notice that a conversely well-ordered set $S$ has the property that $S\rightarrow(\omega^*)_\omega^1$, so our result will show that, in general, the converse of Proposition ~\ref{prop:arrow} is false. This will follow from the next theorem.

\begin{thm}\label{thm:blocks}
    Suppose that $\gamma$ is an ordinal and $\{S_\alpha:\alpha<\gamma\}$ is a collection of subsets of $X$ on which Player II has a winning strategy. Furthermore, assume that $S_\beta<S_\alpha$ for all $\alpha<\beta<\gamma$ (i.e.\, $x<y$ for all $x\in S_\beta$ and $y\in S_\alpha$). Then, Player II has a winning strategy for $BG\left(X,\bigcup_{\alpha<\gamma}S_\alpha\right)$.
\end{thm}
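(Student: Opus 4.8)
The plan is to describe a strategy for Player II in $BG(X, \bigcup_{\alpha<\gamma} S_\alpha)$ that, as the game proceeds, commits to "defeating" the blocks $S_\alpha$ one at a time in decreasing order of $\alpha$, using the fact that the blocks are ordered as $\gamma^*$ (i.e.\ larger index means smaller elements). Fix winning strategies $\sigma_\alpha$ for Player II in each $BG(X, S_\alpha)$. The key observation is that since the $b_n$'s are decreasing, once Player I is forced to play below some point $y$, every subsequent move of both players lies below $y$; so if at some stage the whole remaining play is trapped inside an interval lying entirely below $S_\alpha$, then $S_\alpha$ can no longer contain the limit point and Player II may forget about it. Conversely, the interval $(a_n, b_n)$ meets only an \emph{initial} segment $\{\beta : \beta \le \alpha_n\}$ of the index set (those $S_\beta$ that have a point above $a_n$), and this index $\alpha_n$ is nonincreasing in $n$.

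First I would set up bookkeeping: at each stage $n$ let $\alpha_n$ be the largest $\alpha$ such that $S_\alpha \cap (a_n, \infty) \neq \emptyset$ (if there is a largest; I will need to handle the case where the relevant set of indices has no maximum, e.g.\ by taking the supremum and arguing the limit point cannot lie in $\bigcup_{\alpha<\gamma}S_\alpha$ when $a_n$ sits at such a gap — in fact if no $S_\alpha$ has a point above $a_n$ then Player II has already won, and if the indices with a point above $a_n$ form a set with no maximum then the eventual limit point $x$ satisfies $x \ge a_n$, so $x \notin S_\beta$ for every $\beta$ in that set, hence $x \notin \bigcup S_\alpha$ unless some $S_\beta$ with $\beta$ below all of them is relevant, which contradicts the choice). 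The substantive case is when $\alpha_n$ exists. Player II then plays according to $\sigma_{\alpha_n}$, feeding it the subsequence of moves made since the last time the "active index" changed (reset the simulated play of $\sigma_{\alpha_n}$ to start fresh at the moment $\alpha_n$ became active, which is legitimate because that is just a position reachable in $BG(X,S_{\alpha_n})$). Since $\{\alpha_n\}$ is a nonincreasing sequence of ordinals, it is eventually constant, say equal to $\alpha^*$ from stage $N$ onward; from stage $N$ on, Player II is faithfully following $\sigma_{\alpha^*}$ against the tail of the play, so the limit point $x$ (if any) is not in $S_{\alpha^*}$.

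It remains to rule out $x \in S_\beta$ for $\beta \neq \alpha^*$. If $\beta > \alpha^*$: since $\alpha_N = \alpha^*$ and $\beta > \alpha^*$, we have $S_\beta \cap (a_N, \infty) = \emptyset$, so every element of $S_\beta$ is $\le a_N < x$, hence $x \notin S_\beta$. If $\beta < \alpha^*$: here every element of $S_\beta$ is strictly above every element of $S_{\alpha^*}$; I would argue that $x$ lies at or below $\sup S_{\alpha^*}$, or more carefully, that $\sigma_{\alpha^*}$'s moves keep $b_n$ below all of $S_\beta$ eventually — this needs the observation that once $\alpha^*$ is the active index and Player II plays inside $(a_N, b_{N-1})$ which already excludes $S_{\alpha^* - \text{successor blocks}}$... the clean way is: after stage $N$, $b_N < \inf\{$ some element witnessing $\alpha_N = \alpha^*$ rather than $\alpha^*+1\}$, i.e.\ $b_N \le$ (an upper bound for $S_{\alpha^*}$)? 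This is the delicate point and the main obstacle: I expect the hard part to be showing that Player II's play genuinely eventually lies \emph{below} all blocks $S_\beta$ with $\beta < \alpha^*$, so that those blocks are also defeated. I would resolve it by choosing, at the moment $\alpha^*$ becomes active at stage $N$, for Player II to play $b_N$ small enough that $(a_N, b_N)$ is disjoint from $S_\beta$ for all $\beta < \alpha^*$ that it can be separated from — and observing that being disjoint from $\bigcup_{\beta<\alpha^*}S_\beta$ on an initial segment suffices because those sets lie above $S_{\alpha^*}$ and the active index never climbs back up; combined with the $\sigma_{\alpha^*}$-play on the remaining coordinates, every block is eventually avoided, so $x \notin \bigcup_{\alpha<\gamma} S_\alpha$ and Player II wins.
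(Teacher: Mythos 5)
Your overall architecture --- attack the blocks one at a time via a non-increasing sequence of ``active indices'' and use well-foundedness of $\gamma$ to stabilize --- is the same as the paper's, but there is a genuine gap in how the active index is chosen, and it is fatal as written. You define $\alpha_n$ as the \emph{largest} $\alpha$ with $S_\alpha\cap(a_n,\infty)\neq\emptyset$. Since larger indices mean smaller blocks, this set of indices is an initial segment of $\gamma$, hence an ordinal, and need not have a largest element; and your fallback for that case (``the eventual limit point $x$ satisfies $x\ge a_n$, so $x\notin S_\beta$ for every $\beta$ in that set'') is backwards: $\beta$ belonging to that set means $S_\beta$ has points \emph{above} $a_n$, which is exactly compatible with $x\in S_\beta$. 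Concretely, take $X=\mathbb{Q}$, $\gamma=\omega$, fix rationals $s_k\searrow\sqrt{2}$, and let $S_k=\mathbb{Q}\cap(s_{k+1},s_k)$. If Player I plays rationals $r_n\nearrow\sqrt{2}$, then at every stage every block has points above $a_n$, so your index set is all of $\omega$ and never acquires a maximum; yet Player II must still do something nontrivial (namely play some $b_n<\sqrt{2}$), since otherwise Player I traps a point of some $S_k$ in the intersection. Your strategy issues no instruction here, and your justification for why none is needed is false. The paper avoids this entirely by taking the active index to be the \emph{smallest} $\alpha$ such that $S_\alpha$ has an element \emph{below} the current $a_n$: that is the minimum of an upward-closed set of ordinals, which exists whenever the set is nonempty, and its nonemptiness is exactly what the failure of a separating move at the previous turn guarantees.

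The second issue is the point you yourself flag as delicate --- ruling out $x\in S_\beta$ for $\beta<\alpha^*$ --- which is not resolved by ``play $b_N$ small enough to be disjoint from those $S_\beta$ it can be separated from'': the whole difficulty is what to do when it cannot. The paper's mechanism is that failure to separate forces the active index to decrease \emph{strictly} at the next turn (because $a_{n+1}$ was itself a candidate separating move, so its failure exhibits an element of a smaller-indexed block below $a_{n+1}$), and well-foundedness then produces a turn at which separation succeeds; a merely nonincreasing sequence that stabilizes does not by itself give separation. I note that within your own bookkeeping there is a clean fix you did not state: when $\alpha_N$ exists, any witness $y\in S_{\alpha_N}\cap(a_N,\infty)$ lies below every point of every $S_\beta$ with $\beta<\alpha_N$, so either some such $y$ is a legal move $b_N$, which separates from all higher blocks at once, or every such $y$ is $\ge b_{N-1}$ and every block with index $\le\alpha_N$ already misses $(a_N,b_{N-1})$. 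But that repair still does nothing for the no-maximum case above, so the proposal as it stands does not prove the theorem.
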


\begin{proof}
    Player I starts by playing $a_0\in X$. If there is $b_0>a_0$ such that $b_0\leq y$ for all $y\in\bigcup_{\alpha<\gamma}S_\alpha$, then Player II wins by picking $b_0$. Otherwise, Player II picks any legal move in the first turn. In this case, $a_1>a_0$, so let $\alpha_1<\gamma$ be the smallest such that $y_1<a_1$ for some $y_1\in S_{\alpha_1}$. If there is a legal move $b_1>a_1$ such that $b_1\leq y$ for all $y\in\bigcup_{\alpha<\alpha_1}S_\alpha$, then Player II picks such $b_1$ and then uses his strategy for $S_{\alpha_1}$. Otherwise, Player II plays any legal move $b_1$ and then, since $a_2>a_1$, pick the smallest $\alpha_2<\alpha_1$ such that $y_{2}<a_2$ for some $y_2\in S_{\alpha_2}$.
    
    By repeating this process, since $\gamma$ is well-ordered, we cannot have a decreasing sequence $\dots<\alpha_2<\alpha_1<\gamma$. Hence, at some turn $N$, there is a legal move $b_N>a_N$ for Player II such that $b_N\leq y$ for all $y\in\bigcup_{\alpha<\alpha_N}S_\alpha$. After that, Player II continues playing with his strategy for $BG(X,S_{\alpha_N})$.
    
    We just described a winning strategy for Player II. Indeed, given any $x\in X$ such that $a_n<x<b_n$ for all $n<\omega$, we have the following: given that $y_N<a_N<x$, we have that $x\notin S_{\alpha}$ for all $\alpha>\alpha_N$; also $x\notin S_{\alpha}$ for all $\alpha<\alpha_N$ by the way $b_N$ was played. Finally, $x\notin S_{\alpha_N}$, since Player II used his winning strategy for $BG(X,S_{\alpha_N})$.
\end{proof}

\begin{cor}
    If $S$ is conversely well-ordered, then Player II has a winning strategy on $BG(X,S)$.
\end{cor}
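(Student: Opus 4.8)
The plan is to decompose $S$ into a $\gamma$-indexed family of blocks, each a single point, arranged so that the blocks \emph{decrease} in the order of $X$, and then to apply Theorem~\ref{thm:blocks} directly. Concretely, since $S$ is conversely well-ordered, fix an ordinal $\kappa$ and an enumeration $S=\{s_\alpha:\alpha<\kappa\}$ witnessing that $S$ has order type $\kappa^*$, i.e.\ chosen so that $\alpha<\beta<\kappa$ implies $s_\alpha>s_\beta$. For each $\alpha<\kappa$ set $S_\alpha:=\{s_\alpha\}$.

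Now I would check the two hypotheses of Theorem~\ref{thm:blocks} for the family $\{S_\alpha:\alpha<\kappa\}$. First, each $S_\alpha$ is a singleton, hence in particular well-founded, so $S_\alpha\not\rightarrow(\omega^*)_\omega^1$ and Player II has a winning strategy for $BG(X,S_\alpha)$ by Proposition~\ref{prop:arrow}. Second, the ordering condition $S_\beta<S_\alpha$ for all $\alpha<\beta<\kappa$ holds by construction: if $\alpha<\beta$ then $s_\beta<s_\alpha$, i.e.\ every element of $S_\beta$ is below every element of $S_\alpha$.

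Finally, since $\bigcup_{\alpha<\kappa}S_\alpha=S$, Theorem~\ref{thm:blocks} applied with $\gamma=\kappa$ produces a winning strategy for Player II in $BG(X,S)$, as desired. I do not expect a genuine obstacle here; the only point demanding care is the direction of the indexing — one must number the points of $S$ so that those lying furthest to the left receive the \emph{largest} indices, so as to match the convention "$S_\beta<S_\alpha$ whenever $\alpha<\beta$" used in the statement of Theorem~\ref{thm:blocks} (this is precisely the reason $S$ being conversely, rather than ordinarily, well-ordered is what makes the argument go through).
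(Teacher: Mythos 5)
Your proof is correct and is precisely the argument the paper intends: the corollary is stated without proof as an immediate consequence of Theorem~\ref{thm:blocks}, obtained by decomposing the conversely well-ordered set into singletons $S_\alpha=\{s_\alpha\}$ (each handled by Proposition~\ref{prop:arrow}) indexed so that $S_\beta<S_\alpha$ for $\alpha<\beta$. Your remark about the direction of the indexing is exactly the right point of care.
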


\begin{cor}\label{cor:any_kappa}
    For any infinite cardinal $\kappa$, there is a dense linear order $(X,<)$ of size $\kappa$ on which Player II has a winning strategy on every subset of $X$.
\end{cor}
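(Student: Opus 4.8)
\medskip

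My plan is to reduce the claim to producing one well-chosen space and then read the result off from Theorem~\ref{thm:blocks}. First, observe that it suffices to find, for each infinite cardinal $\kappa$, a dense linear order $X$ of size $\kappa$ on which Player II has a winning strategy for $BG(X,X)$: each interval $(a_n,b_n)$ is by definition a subset of $X$, so any play following such a strategy satisfies $\bigcap_{n<\omega}(a_n,b_n)=\emptyset$; in particular no $x\in S$ lies in all the intervals, so the very same strategy wins $BG(X,S)$ for every $S\subseteq X$.

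To build such an $X$ I would stack $\kappa$ copies of $\mathbb{Q}$ in reverse order, the idea being that the internal copies of $\mathbb{Q}$ make $X$ dense while the indexing gives a decreasing block decomposition of exactly the kind Theorem~\ref{thm:blocks} wants. Concretely, let $X=\{(\alpha,q):\alpha<\kappa,\ q\in\mathbb{Q}\}$, ordered by declaring $(\alpha,q)<(\beta,r)$ iff $\beta<\alpha$, or else $\alpha=\beta$ and $q<r$; and for each $\alpha<\kappa$ put $S_\alpha=\{\alpha\}\times\mathbb{Q}$. The checks here are routine: $|X|=\kappa$ (a union of $\kappa$ countable sets); the $S_\alpha$ partition $X$, with $S_\beta<S_\alpha$ whenever $\alpha<\beta<\kappa$, directly from the definition of the order; $X$ is a linear order; and $X$ is dense, the only case needing a word being $(\alpha,q)<(\beta,r)$ with $\alpha\neq\beta$, where necessarily $\beta<\alpha$ and then any $(\beta,r')$ with $r'<r$ sits strictly in between, using that $\mathbb{Q}$ has no least element. (One also checks $X$ has no endpoints, so the game never stalemates.)

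Finally, each $S_\alpha$ is countable, hence $S_\alpha\not\rightarrow(\omega^*)_\omega^1$, so Proposition~\ref{prop:arrow} equips Player II with a winning strategy for $BG(X,S_\alpha)$. The family $\{S_\alpha:\alpha<\kappa\}$ now meets the hypotheses of Theorem~\ref{thm:blocks} with $\gamma=\kappa$, which yields a winning strategy for Player II on $BG\left(X,\bigcup_{\alpha<\kappa}S_\alpha\right)=BG(X,X)$; by the reduction in the first paragraph, Player II therefore wins $BG(X,S)$ for every $S\subseteq X$, as required. I do not expect a genuine obstacle in this argument: all the real work is in choosing $X$ to be simultaneously dense and a decreasing ordinal-indexed union of countable pieces, and once the \emph{$\kappa$ copies of $\mathbb{Q}$ in reverse order} picture is fixed, every remaining step is either a one-line verification or a direct appeal to one of the two results above.

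\medskip
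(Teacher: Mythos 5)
Your proof is correct and follows essentially the same route as the paper: the same order $\kappa^*\times\mathbb{Q}$ with the same decomposition into the countable blocks $S_\alpha=\{\alpha\}\times\mathbb{Q}$, combined via Proposition~\ref{prop:arrow} and Theorem~\ref{thm:blocks}. Your preliminary reduction to winning $BG(X,X)$ is a clean (and valid) extra observation, but it does not change the substance of the argument.
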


\begin{proof}
    We take $X=\kappa^*\times\mathbb Q$ ordered lexicographically. Concretely, given $x=(\alpha,p)$ and $y=(\beta, q)$ elements of $X$, $x\leq y$ if either $\beta<\alpha$ or $\alpha=\beta$ and $p<_\mathbb Qq$. Since the rationals are countable and $\kappa$ is infinite, clearly $X$ has the desired size. The fact that $X$ is dense follows from the fact that $\mathbb Q$ is dense and unbounded.

    Now let $S\subseteq X$ and define, for each $\alpha<\kappa$, $S_\alpha=\{\alpha\}\times\mathbb Q$. Clearly, each $S_\alpha$ is countable, and thus Player II has a winning strategy for $BG(X,S_\alpha)$, by virtue of Proposition~\ref{prop:arrow}. Since $S=\bigcup_{\alpha<\kappa}S_\alpha$ and the $S_\alpha$ are ordered the way that Theorem~\ref{thm:blocks} requires, the result follows.
\end{proof}

\section{Concluding remarks and open questions}

We first turn to the question: \textit{when does the converse of Proposition \ref{prop:arrow} hold?}. By the previous results such orderings must be $\omega_1$-free and $\omega_1^*$-free. Separable and even ccc linear orderings have this property. The separable case was established in \cite{brian-clontz}. Then, the next step is to study dense linear orders that are $\omega_1$-free, $\omega_1^*$-free but not separable. The typical example of such ordering is the Aronszajn ordering.

\begin{defn}
    An Aronszajn line (A-line) is an uncountable linearly ordered set that is $\omega_1$-free, $\omega_1^*$-free and contains no uncountable subset of the reals.
\end{defn}

Following \cite{A-orderings} we define a proper decomposition for A-lines. Let $A$ be a dense Aronszajn line. Let $D_0:=\emptyset$. Given a countable $D_\alpha\subseteq A$, consider the equivalence relation on $A\backslash D_\alpha$: $x\sim_\alpha y$ if and only if there is no $z\in D_\alpha$ such that $x<z<y$. Notice that each $\sim_\alpha$ class is convex. Let $T_\alpha$ be the set of all $\sim_\alpha$ classes and let $D_\alpha'$ be a selector of the $\sim_\alpha$ classes. Since $D_\alpha$ is countable and dense in $D_{\alpha+1}:=D_\alpha\cup D_\alpha'$, then $D_{\alpha+1}$ must be countable because $A$ does not contain uncountable separable subsets. Given $D_\alpha$ for $\alpha<\gamma$ and $\gamma$ limit, we let $D_\gamma=\bigcup_{\alpha<\gamma}D_\alpha$. Since $A$ does not contain copies of $\omega_1$ and $\omega_1^*$, we have that $A=\bigcup_{\alpha<\omega_1}D_\alpha$. Also,  $(T,\supseteq)$ is an Aronszajn tree where $T=\bigcup_{\alpha<\omega_1}T_\alpha$.

\begin{question}
    Is there a dense Aronszajn line $A$ where Player II has a winning strategy in $BG(A,S)$ for all $S\subseteq A$?
\end{question}

Another class of lines where a similar decomposition can be performed is the class of nowhere separable Souslin continua:

\begin{defn}
    A Souslin continuum is a non-separable dense, complete linearly ordered set that has the countable chain condition (ccc), i.e., every family of pairwise disjoint open intervals is countable. A Souslin continuum is nowhere separable if no non-empty open interval is separable.
\end{defn}

We define a proper decomposition for the nowhere separable Souslin continuum $L$ as follows: let $D_0:=D\neq\emptyset$ be any countable set. Given a countable $D_\alpha\subseteq L$ consider the equivalence relation on $L\backslash\overline{D_\alpha}$: $x\sim_\alpha y$ if and only if there is no $z\in D_\alpha$ such that $x<z<y$. Notice that each $\sim_\alpha$ class is convex. Let $T_\alpha$ be the set of all $\sim_\alpha$ classes and let $D_\alpha'$ be a selector of the $\sim_\alpha$ classes. Since $L$ has the ccc, then $T_\alpha$ is countable. Let $D_{\alpha+1}=D_\alpha\cup D_\alpha'$. For $\gamma<\omega_1$ limit let $D_\gamma=\bigcup_{\alpha<\gamma}D_\alpha$.

\begin{lem}
    If $\{D_\alpha:\alpha<\omega_1\}$ is a proper decomposition of a nowhere separable Souslin continuum $L$, then $$L=\bigcup_{\alpha<\omega_1}\overline{D_\alpha}$$
\end{lem}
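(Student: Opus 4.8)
The plan is to argue by contradiction, using the elementary fact that a dense linearly ordered set with the ccc contains no copy of $\omega_1$: if $\{y_\xi:\xi<\omega_1\}$ were strictly increasing, the open intervals $(y_\xi,y_{\xi+1})$ would form a pairwise disjoint uncountable family of nonempty sets. So suppose toward a contradiction that there is $x\in L\setminus\bigcup_{\alpha<\omega_1}\overline{D_\alpha}$. First I would record the monotonicity built into the decomposition: $\alpha\leq\beta$ implies $D_\alpha\subseteq D_\beta$ (immediate from the successor step $D_{\alpha+1}=D_\alpha\cup D_\alpha'$ and the limit step $D_\gamma=\bigcup_{\alpha<\gamma}D_\alpha$), and hence $\overline{D_\alpha}\subseteq\overline{D_\beta}$. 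Since $x\notin\overline{D_\alpha}$ for every $\alpha$, the point $x$ lies in a unique $\sim_\alpha$-class $C_\alpha$, and because $L\setminus\overline{D_\beta}\subseteq L\setminus\overline{D_\alpha}$ the sets $C_\alpha$ form a $\subseteq$-decreasing chain of convex subsets of $L$, each containing $x$.

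Next I would bring in the selectors. For each $\alpha$ let $d_\alpha$ be the element of the selector $D_\alpha'$ representing the class $C_\alpha$, so that $d_\alpha\in C_\alpha$ and $d_\alpha\in D_\alpha'\subseteq D_{\alpha+1}$. Since $x\notin\overline{D_{\alpha+1}}\supseteq D_{\alpha+1}$ we have $d_\alpha\neq x$, and as $d_\alpha$ and $x$ both lie in the convex set $C_\alpha$, exactly one of $d_\alpha<x$ or $d_\alpha>x$ holds. By the pigeonhole principle there is an uncountable $A\subseteq\omega_1$ on which the same alternative holds; assume without loss of generality that $d_\alpha<x$ for all $\alpha\in A$, the other case being symmetric.

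The heart of the argument is to show that $(d_\alpha)_{\alpha\in A}$ is strictly increasing. Fix $\alpha<\beta$ in $A$. Because $d_\alpha\in D_{\alpha+1}\subseteq\overline{D_{\alpha+1}}$ while $C_{\alpha+1}$ is disjoint from $\overline{D_{\alpha+1}}$, we get $d_\alpha\notin C_{\alpha+1}$; but $C_{\alpha+1}$ is convex, contains $x$, and $d_\alpha<x$, so every point of $C_{\alpha+1}$ lies strictly above $d_\alpha$. Since $\beta\geq\alpha+1$ we have $C_\beta\subseteq C_{\alpha+1}$, and $d_\beta\in C_\beta$, whence $d_\beta>d_\alpha$. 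Thus $\alpha\mapsto d_\alpha$ is an order-preserving injection of $(A,<)$ into $L$; as $A$ is an uncountable subset of $\omega_1$, it has order type $\geq\omega_1$, so $L$ contains a copy of $\omega_1$, contradicting the ccc. Hence no such $x$ exists and $L=\bigcup_{\alpha<\omega_1}\overline{D_\alpha}$.

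The only genuinely delicate point, and the one I would check most carefully, is this monotonicity step: once the representative $d_\alpha$ is absorbed into $D_{\alpha+1}$, it must push the entire class $C_{\alpha+1}$ — and therefore every later class $C_\beta$, and in particular $d_\beta$ for $\beta>\alpha$ in $A$ — strictly to one side of $d_\alpha$. Everything else is bookkeeping about the increasing chain $(D_\alpha)_{\alpha<\omega_1}$ together with the standard ccc argument above; in particular, completeness of $L$ and the ``nowhere separable'' hypothesis are not needed for this lemma.
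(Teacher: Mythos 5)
Your proof is correct, but it takes a genuinely different route from the paper's. The paper also fixes a point $x\notin\bigcup_{\alpha<\omega_1}\overline{D_\alpha}$ and the decreasing chain of classes $I^0_\xi=[x]_\xi$, but it never looks at the selectors: it uses nowhere separability to show that each $T_\alpha$ has more than one class, picks for each $\xi$ a $\sim_{\xi+1}$-class $I^1_{\xi+1}$ disjoint from $I^0_{\xi+1}$, and contradicts ccc directly with the resulting uncountable family of (open, convex) classes. You instead exploit the selectors: the representative $d_\alpha\in D_\alpha'$ of the class of $x$ is absorbed into $D_{\alpha+1}$, hence pushed strictly to one side of every later class, so the $d_\alpha$ form a strictly monotone $\omega_1$-sequence, and you finish with the standard fact that a dense ccc order contains no copy of $\omega_1$ (nor of $\omega_1^*$, which is what the symmetric case $d_\alpha>x$ actually produces --- worth stating explicitly). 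Your version buys two things. First, as you note, it uses neither completeness nor nowhere separability, so it applies to any decomposition of this kind in a dense ccc order. Second, it sidesteps a point the paper's argument leaves implicit: the classes $I^1_{\xi+1}$ are pairwise disjoint only if each is chosen inside $I^0_\xi\setminus I^0_{\xi+1}$ (otherwise, since $\sim_{\eta+1}$ refines $\sim_{\xi+1}$, a later $I^1_{\eta+1}$ could land inside an earlier $I^1_{\xi+1}$), and arranging that choice is exactly where nowhere separability of every interval is needed. The paper's argument, in exchange, is shorter and makes visible the role of nowhere separability in the construction; yours shows the lemma is really a consequence of ccc plus density alone.
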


\begin{proof}
    Suppose that $x\in L$ and $x\notin \bigcup_{\alpha<\omega_1}\overline{D_\alpha}$. For each $\xi<\omega_1$ let $I_\xi^0:=[x]_\xi$ (the $\sim_\xi$ class of $x$). Note that $I_\eta^0\subseteq I_\xi^0$ for all $\xi<\eta$.
    \begin{claim}
        For all $\alpha<\omega_1$, $|T_\alpha|>1$.
    \end{claim}
    \begin{proof}
        Let $x_0\in D_\alpha$. Since $L$ is nowhere separable, $\overline{D_\alpha}$ is nowhere-dense so pick $y_1\in (L\backslash\overline{D_\alpha})\cap (-\infty, x_0)$ and $y_1\in (L\backslash\overline{D_\alpha})\cap (x_0,\infty)$. Then, $[y_1]_\alpha,[y_2]_\alpha\in T_\alpha$.
    \end{proof}
    For each $\xi<\omega_1$ let $I_{\xi+1}^1\in T_{\xi+1}$ such that $I_{\xi+1}^1\cap I_{\xi+1}^0=\emptyset$. Then, $\{I_{\xi+1}^1:\xi<\omega_1\}$ is an uncountable pairwise disjoint family of open sets, contradicting ccc.
\end{proof}

\begin{question}
    Can this proper decomposition be used to show that in a nowhere separable Souslin continuum $L$, Player II has a winning strategy for $BG(L,S)$ if and only if $S$ is countable?
\end{question}


\bibliographystyle{alpha}
\nocite{*}
\bibliography{baker_game}

\end{document}